\newtheorem{theorem}{Theorem}
\newtheorem*{lemma}{Lemma}
\newtheorem{corollary}{Corollary}
\newtheorem{proposition}{Proposition}
\begin{document}

\title[]{Greedy Matching in Optimal Transport\\ with  Concave Cost}

\author[]{Andrea Ottolini \and Stefan Steinerberger}

\address{Cowles Foundation, Yale University}
 \email{andrea.ottolini@yale.edu }

\address{Department of Mathematics, University of Washington, Seattle}
 \email{steinerb@uw.edu}

\begin{abstract}
    We consider the optimal transport problem between a set of $n$ red points and a set of $n$ blue points  subject to a concave cost function such as $c(x,y) = \|x-y\|^{p}$ for $0< p < 1$. Our focus is on a particularly simple matching algorithm: match the closest red and blue point, remove them both and repeat. We prove that it provides good results in any metric space $(X,d)$ when the cost function is $c(x,y) = d(x,y)^{p}$ with $0 < p < 1/2$. Empirically, the algorithm produces results that are \textit{remarkably} close to optimal -- especially as the cost function gets more concave; this suggests that greedy matching may be a good toy model for Optimal Transport for very concave transport cost.  
\end{abstract}

\subjclass[2020]{82B44, 90B80} 
\keywords{Optimal Transport, Euclidean Random Assignment Problems}

\maketitle

\vspace{0pt}

\section{Introduction}
\subsection{The problem.} The original motivation behind this paper is to understand the geometry of optimal transport with concave cost. Perhaps the easiest instance of this problem is the following: let
$X = \left\{x_1, \dots, x_n\right\}$ and $Y = \left\{y_1, \dots, y_n\right\}$ be two sets of real numbers, what can be said about the optimal transport cost
$$W^p_{p}(X,Y) = \min_{\pi \in S_n} \sum_{i=1}^{n} | x_i - y_{\pi(i)}|^{p},$$
where $\pi:\left\{1,2,\dots, n\right\} \rightarrow \left\{1,2,\dots, n\right\}$ ranges over all permutations? The answer is trivial when $p \geq 1$: order both sets in increasing order and send the $i-$th largest element $x_i$ to the $i-$th largest element $y_i$. After ordering the points, the optimal permutation is the identity permutation $\pi(i) = i$. The problem becomes highly nontrivial when the cost function is concave.
\vspace{-5pt}
\begin{center}
\begin{figure}[h!]
    \begin{tikzpicture}[scale=0.95]
        \node at (0,0) {\includegraphics[width=0.5\textwidth]{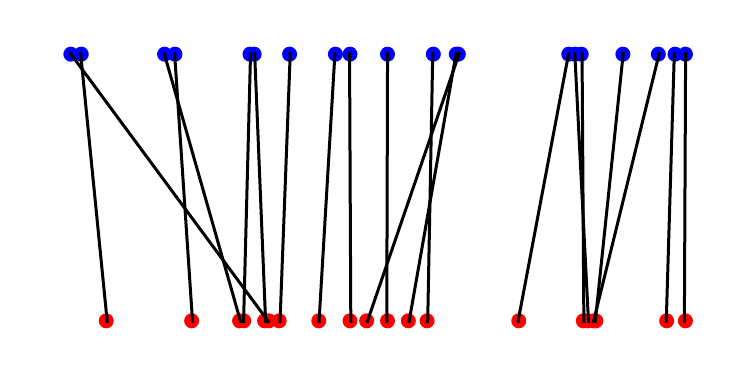}};
        \node at (7,0) {\includegraphics[width=0.5\textwidth]{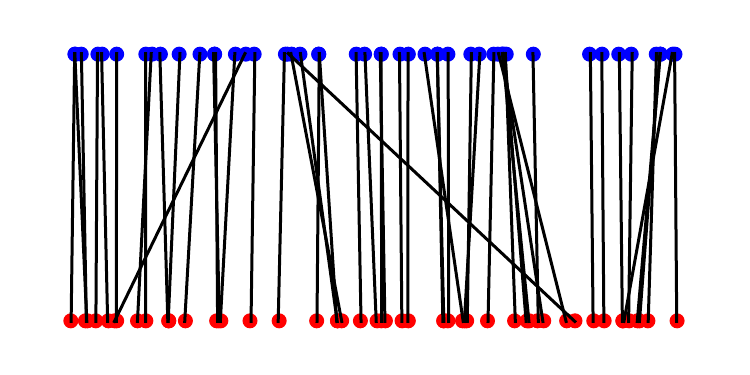}};
    \end{tikzpicture}
    \vspace{-20pt}
    \caption{20 (right: 50) red points on $\mathbb{R}$ being optimally matched to 20 (right: 50) blue points on $\mathbb{R}$ (shown displaced to illustrate the matching) and subject to cost $c(x,y) = |x-y|^{1/2}$.}
\end{figure}
\end{center}

As was already pointed out by Gangbo \& McCann \cite{gangbo} 
\begin{quote}
For concave functions of the distance, the picture which emerges is rather different. Here the optimal maps will not be smooth, but display an intricate structure which --
for us -- was unexpected; it seems equally fascinating from the mathematical and the
economic point of view. 
[...]
To describe one effect in economic terms: the concavity of the cost function
favors a long trip and a short trip over two trips of average length [...] it can be
efficient for two trucks carrying the same commodity to pass each other traveling opposite
directions on the highway: one truck must be a local supplier, the other on a longer haul. (Gangbo \& McCann, \cite{gangbo})
\end{quote}
The problem has received increased attention in recent years, we refer to results of Bobkov and Ledoux \cite{bob, bob2},
Boerma, Tsyvinski, Wang and Zhang \cite{boerma},
Caracciolo, D’Achille, Erba and Sportiello \cite{c},
Caracciolo, Erba and Sportiello \cite{c2, c3}, Delon, Salomon and Sobolevski \cite{delon, delon2}, Juillet \cite{juillet} and  McCann \cite{mccann}.

\vspace{-10pt}
\begin{center}
\begin{figure}[h!]
    \begin{tikzpicture}
        \node at (0,0) {\includegraphics[width=0.45\textwidth]{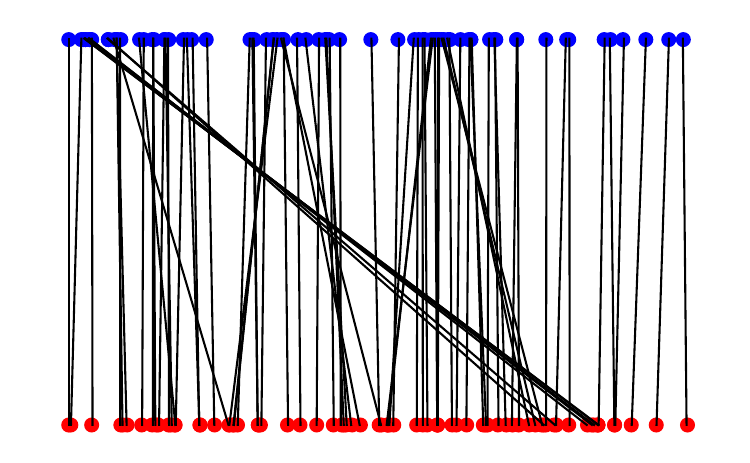}};
        \node at (6.5,0) {\includegraphics[width=0.45\textwidth]{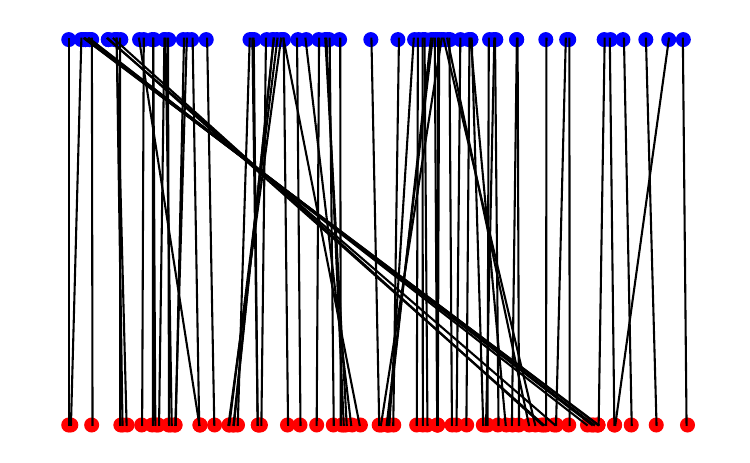}};
    \end{tikzpicture}
    \vspace{-10pt}
    \caption{Left: 50 red points on $\mathbb{R}$ being matched to 50 blue points on $\mathbb{R}$ when $c(x,y) =  \log |x-y|$. Right: same points and same cost function matched with the greedy algorithm. The two matchings are very similar: why?}
    \label{fig:2}
\end{figure}
\end{center}
\vspace{-10pt}
A reason why the problem is interesting is illustrated in Figure \ref{fig:2}: as suggested by Gangbo-McCann, there is a very curious dichotomy where most points get matched to points that are very close with a few exceptional points being transported a great distance. It is somewhat clear, in a qualitative sense, that this is to be expected (considering, for example, the Jensen inequality for concave functions). However, on a more quantitative level, the non-locality poses considerable difficulties.

\subsection{Dyck and Greedy Matching.} If the cost function is given by $c(x,y) = h(|x-y|)$ with $h$ concave, it appears there exist two natural toy models that are effective in different regimes: the Dyck matching and the greedy matching.

\begin{center}
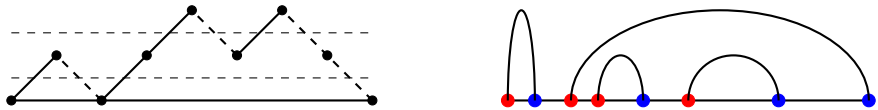
\begin{figure}[h!]
\begin{tikzpicture}[scale=1.2]
\draw [dashed] (0.5, 0.25) -- (4.5, 0.25);
\draw [dashed] (0.5, 0.75) -- (4.5, 0.75);
    \draw [thick] (0+0.5,0) -- (4+0.5,0);
    \filldraw (0+0.5,0) circle (0.05cm);
    \draw[thick] (0+0.5,0) -- (0.5+0.5, 0.5);
        \filldraw (0.5+0.5,0.5) circle (0.05cm);
    \draw[thick, dashed] (1+0.5,0) -- (1, 0.5);
            \filldraw (1+0.5,0) circle (0.05cm);
    \draw[thick] (1+0.5,0) -- (2+0.5, 1);
        \filldraw (1.5+0.5,0.5) circle (0.05cm);
   \filldraw (2+0.5,1) circle (0.05cm);
   \draw[thick, dashed] (2.5+0.5,0.5) -- (2+0.5, 1);
      \filldraw (2.5+0.5,0.5) circle (0.05cm);
   \draw[thick] (2.5+0.5,0.5) -- (3+0.5, 1);
      \filldraw (3+0.5,1) circle (0.05cm);
   \draw[thick, dashed] (4+0.5,0) -- (3+0.5, 1);
         \filldraw (3.5+0.5,0.5) circle (0.05cm);
      \filldraw (4+0.5,0) circle (0.05cm);
          \draw [thick] (6,0) -- (10,0);
      \filldraw[red] (6,0) circle (0.07cm);    
     \filldraw[blue] (6.3,0) circle (0.07cm); 
           \filldraw[red] (6.7,0) circle (0.07cm); 
                 \filldraw[red] (7,0) circle (0.07cm); 
      \filldraw[blue] (7.5,0) circle (0.07cm); 
                       \filldraw[red] (8,0) circle (0.07cm); 
                   \filldraw[blue] (9,0) circle (0.07cm);
    \filldraw[blue] (10,0) circle (0.07cm); 
 \draw[thick] (6,0) arc
    [
        start angle=180,
        end angle=0,
        x radius=0.15cm,
        y radius =1cm
    ] ;
     \draw[thick] (6.7,0) arc
    [
        start angle=180,
        end angle=0,
        x radius=1.65cm,
        y radius =1cm
    ] ;
     \draw[thick] (7,0) arc
    [
        start angle=180,
        end angle=0,
        x radius=0.25cm,
        y radius =0.5cm
    ] ;
     \draw[thick] (8,0) arc
    [
        start angle=180,
        end angle=0,
        x radius=0.5cm,
        y radius =0.5cm
    ] ;
    \end{tikzpicture}
    \caption{A collection of $n=5$ red and blue points, the function $g(x)$ (left, rescaled for clarity) and the Dyck matching (right).}
     \label{fig:dyck}
    \end{figure}
\end{center}

The first such model is the \textit{Dyck matching} of Caracciolo-D’Achille-Erba-Sportiello \cite{c}: their idea is to introduce $g:[0,1] \rightarrow \mathbb{Z}$
$$ g(x) = \# \left\{1 \leq i \leq n: x_i \leq x\right\} - \# \left\{1 \leq i \leq n: y_i \leq x\right\}  $$
The function is increasing whenever $x$ crosses a new element of $X$ while it decreases every time it crosses an element of $Y$. The Dyck matching is then obtained by matching across level sets of the function $g$ (see Fig. \ref{fig:dyck}). The Dyck matching is independent of the cost function.
It is shown numerically in \cite{c} (and reproduced in \S 2.6) that the Dyck matching produces a nearly optimal matching whose costs exceeds the optimal cost by very little. 
The second toy model is given by a simple greedy matching which works in general metric spaces.
\begin{quote}
\textbf{Greedy Matching.} 
\begin{enumerate}
\item Determine
$$ m = \min_{1 \leq i,j \leq n} c(x_i, y_j).$$
\item Find a pair $(x_i, y_j)$ with $c(x_i, y_j) =m$ and set $\pi(i) = j$.
\item Remove $x_i$ from $X$ and $y_j$ from $Y$ and repeat.
\end{enumerate}
\end{quote}

If the cost function is strictly monotonically increasing in the distance $c(x,y) = h(|x-y|)$, this greedy matching is, like the Dyck matching, independent of the cost function. This algorithm leads to mediocre results when the cost function is convex. This was already observed in the PhD thesis of d'Achille \cite[Section 1.4]{achille} who explicitly considers the algorithm when $c(x,y) = |x-y|^p$ and $p\geq 1$ and shows that the results are not particularly good. 
One of the main points of our paper is to point out that the greedy matching is \textit{very} good for very concave cost functions.

\vspace{-5pt}

\begin{center}
    \begin{figure}[h!]
        \begin{tikzpicture}
            \node at (0,0) {\includegraphics[width=0.4\textwidth]{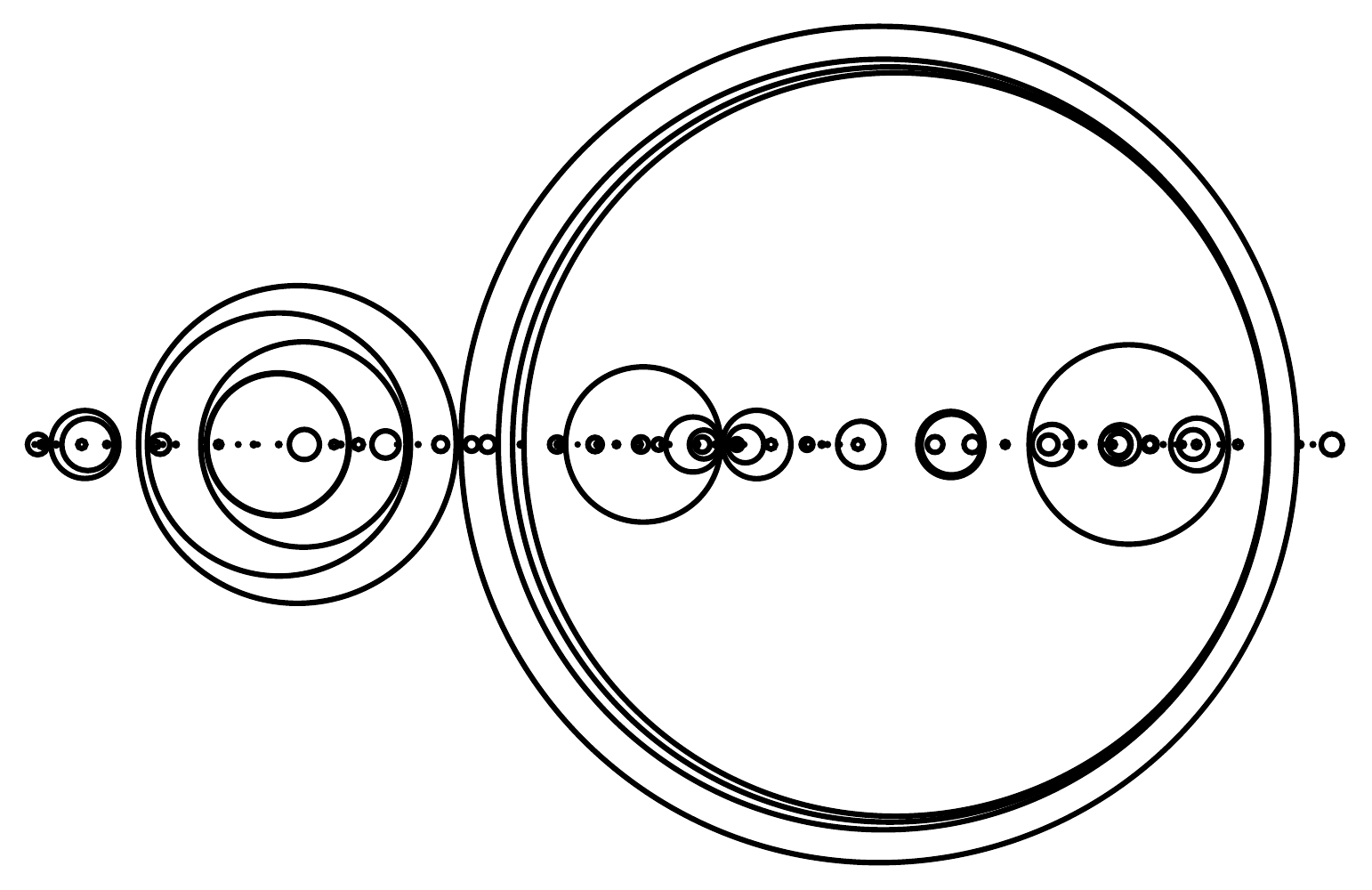}};
            \node at (0.9,0.65) { $|x-y|^{0.01}$};
          \node at (6,0) {\includegraphics[width=0.4\textwidth]{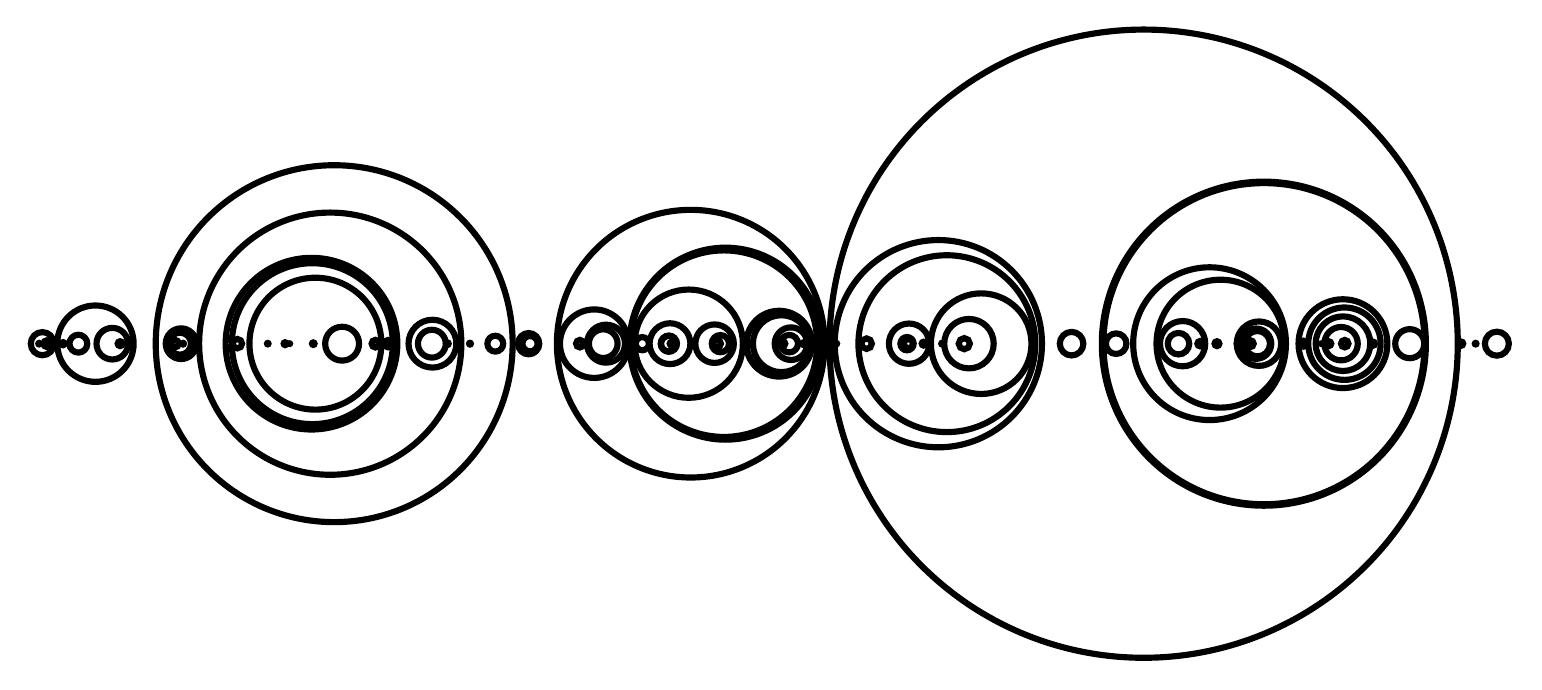}};
            \node at (5.9,1) { $|x-y|^{0.99}$};
     \node at (0,-3.75) {\includegraphics[width=0.4\textwidth]{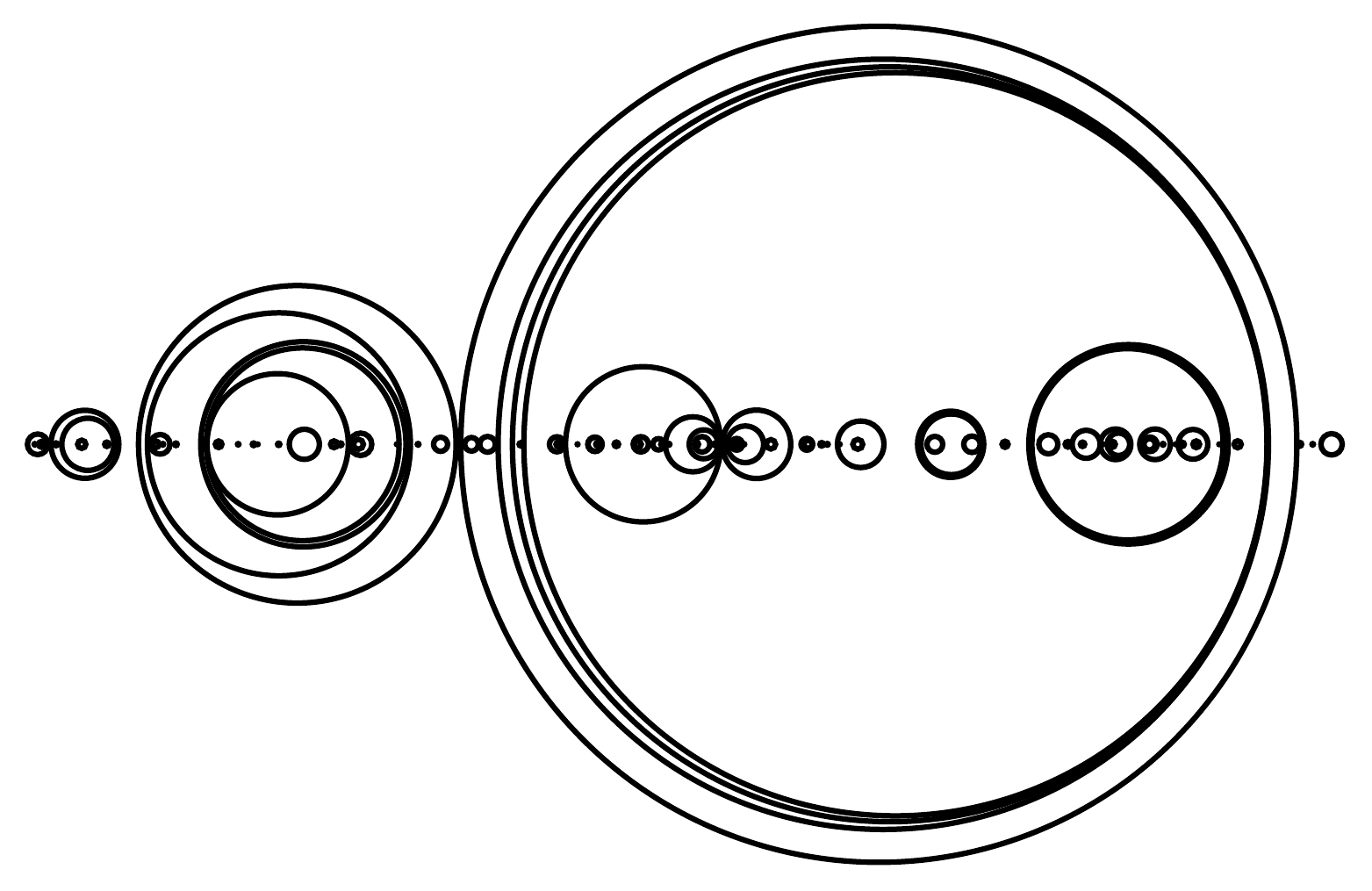}};
         \node at (0.7,-3.1) {greedy};
     \node at (6,-3.75) {\includegraphics[width=0.4\textwidth]{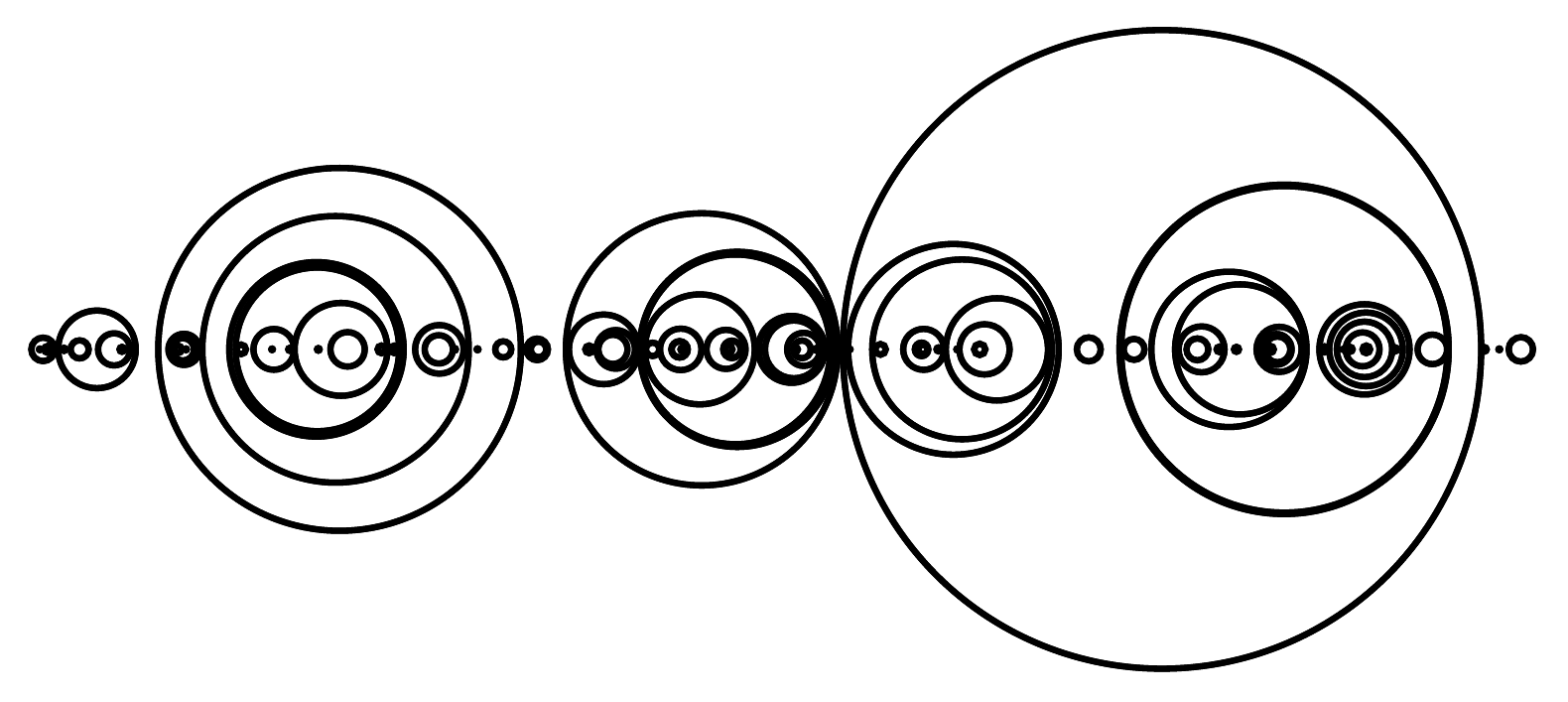}};
          \node at (5.6,-2.85) {Dyck};
        \end{tikzpicture}
        \caption{Four different matchings of the exact same set of $n=100$ points illustrated using McCann circles (\S 2.2): the optimal matching for $c(x,y) = |x-y|^{0.01}$ is very similar to the greedy matching, for  $c(x,y) = |x-y|^{0.99}$ it is close to the Dyck matching.}
        \label{fig:compare}
    \end{figure}
\end{center}

\section{Results}

\subsection{Main Result} It is clear that a greedy matching will initially perform well since it is matching points that are very close to each other. The main concern is that the greedy matching ends up maneuvering itself into a situation where all remaining choices are bad. We will now show that, in a suitable sense, this does not happen. 
The result will be phrased in terms of the Wasserstein distance $W_1$ between the sets $X$ and $Y$ where the sets will be assumed to lie in a metric space
$$ W_1(X,Y) = \inf_{\pi \in S_n} \sum_{i=1}^{n} d(x_i, y_{\pi(i)}).$$
When $0 < p < 1$ it follows from H\"older's inequality with coefficients $1/p$ and $1/(1-p)$ that we can bound the size of the optimal matching by
$$W^p_{p}(X,Y) = \inf_{\pi \in S_n} \sum_{i=1}^{n} d(x_i, y_{\pi(i)})^{p} \leq W_1(X,Y)^{p} \cdot n^{1-p}.$$
Our main result shows that, in any arbitrary metric space, the greedy matching, proceeding at each step blindly and without any foresight into the future, achieves the same rate up to a constant when $0 < p <1/2$. Note that the greedy matching will usually be very different from the one that minimizes $W_1$.
\begin{theorem}[Main Result] For any $0 < p < 1$ there is a constant $c_{p} > 0$ such that for \emph{any} two sets of $n$ points $X,Y$ in \emph{any} metric space, the greedy matching produces matching satisfying, with respect to the cost function $c(x,y) = d(x,y)^{p}$,
$$  \emph{Greedy}_{p}(X,Y) \leq c_{p} \cdot W_1(X,Y)^{p} \cdot
\begin{cases} 
n^{1-p} \qquad &\mbox{if}~0 < p < \frac{1}{2} \\
\sqrt{n} \cdot \log{n} \qquad &\mbox{if}~p=\frac{1}{2} \\
n^{p} \qquad &\mbox{if} ~\frac{1}{2} < p < 1.
\end{cases}
$$
\end{theorem}
The result is optimal up to constants when $0< p < 1/2$. We illustrate this on $[0,1]$ equipped with the Euclidean distance (see Fig. \ref{fig:sharp}). If the blue point are close to $0$ and the red points are close to 1, then the greedy algorithm takes $\mbox{Greedy}_p(X,Y) \sim n$ while $W_1(X,Y) \sim n$. If all the points alternate in an equispaced way, $\mbox{Greedy}_p(X,Y) \sim n^{1-p}$ and $W_1(X,Y) \sim 1$. 
The proof shows that $c_p \sim 1+2p$ for $p$ small.
There is a clear change around $p =1/2$, the greedy matching becomes less effective (see also Fig. \ref{fig:bound}). We also note a result of Bobkov-Ledoux \cite{bob} in one dimension: the optimal transport cost for $d(x,y)^{p}$ and $0 < p < 1$ is, in expectation, smaller than the one induced by the ordered optimal $p = 1$ matching.
\vspace{-10pt}
\begin{center}
    \begin{figure}[h!]
        \centering
     \begin{tikzpicture}
         \node at (0,0) {\includegraphics[width=0.42\textwidth]{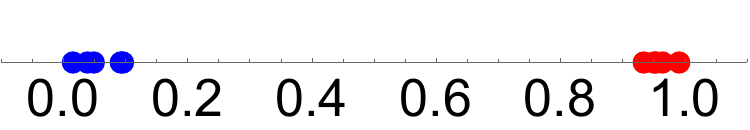}};
            \node at (6,0) {\includegraphics[width=0.42\textwidth]{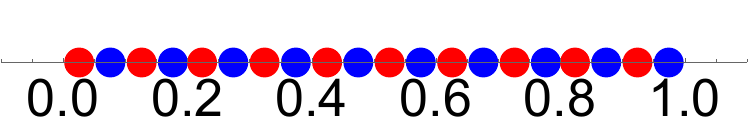}};
     \end{tikzpicture}
        \caption{Examples where the Theorem is sharp ($0 < p < 1/2$).}
        \label{fig:sharp}
    \end{figure}
\end{center}

\subsection{Greedy is non-crossing.} It is known (see McCann \cite{mccann}) that if we match points $\left\{x_1, \dots, x_n\right\}$ to $\left\{y_1, \dots, y_n\right\}$ under a cost function $c(x,y) = h(|x-y|)$ with $h \geq 0$ concave, then the optimal matching satisfies a non-crossing condition which can be described as follows: if the optimal matching sends $x_i$ to $y_{\pi(i)}$, then the $n$ circles $C_i$ that go tangentially though $x_i$ and $y_{\pi(i)}$, the circles
$$ C_i = \left\{z \in \mathbb{R}^2: \left \|z - \frac{x_i + y_{\pi(i)}}{2} \right\| =  \frac{ \left|y_{\pi(i)}-x_i\right|}{2}\right\},$$
do not intersect. The greedy matching has this desirable property for trivial reasons.

\begin{figure}[h!]
\includegraphics[width=0.6\textwidth]{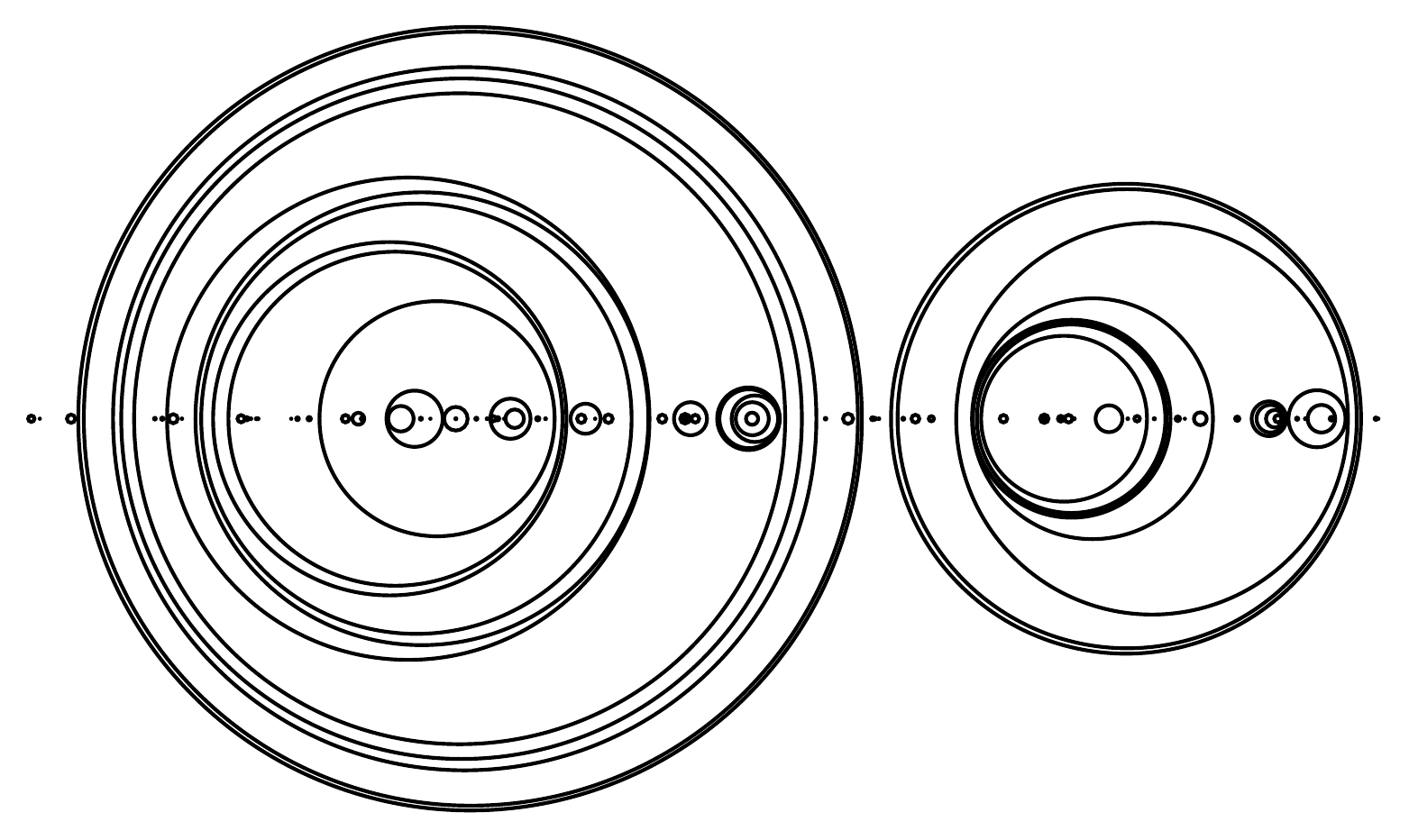}
\caption{The circles $C_i$ for an example of $n=100$ points greedily matched with another $n$ points under $c(x,y) = |x-y|^{0.01}$.}
\end{figure}

\begin{proposition}
    The greedy matching is non-crossing.
\end{proposition}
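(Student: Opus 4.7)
The plan is to argue by contradiction, first translating the geometric claim $C_i \cap C_j = \emptyset$ into a combinatorial statement about diameter intervals on the real axis. Write $I_k = [\min(x_k, y_{\pi(k)}), \max(x_k, y_{\pi(k)})]$ for the diameter of $C_k$. Two circles whose diameters both sit on a common line meet in $\mathbb{R}^2$ if and only if the two intervals share an endpoint or \emph{cross} (each contains exactly one endpoint of the other in its open interior); this is immediate from the picture. Because the greedy procedure removes matched points from the active pool after each step, the four endpoints $x_i, y_{\pi(i)}, x_j, y_{\pi(j)}$ are pairwise distinct, so the shared-endpoint case cannot occur and only crossing has to be ruled out.

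Now assume, without loss of generality, that the pair $(x_i, y_{\pi(i)})$ is matched before $(x_j, y_{\pi(j)})$. At the moment it was selected, $x_j$ and $y_{\pi(j)}$ were still in play, so the defining property of greedy yields
\[
|x_i - y_{\pi(i)}| \leq \min\bigl\{|x_i - y_{\pi(j)}|,\ |x_j - y_{\pi(i)}|,\ |x_j - y_{\pi(j)}|\bigr\}.
\]
If $I_j$ crossed $I_i$, then exactly one endpoint $p$ of $I_j$ would lie strictly between $x_i$ and $y_{\pi(i)}$, forcing both $|p - x_i|$ and $|p - y_{\pi(i)}|$ to be strictly smaller than $|x_i - y_{\pi(i)}|$. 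If $p = x_j$ then $|x_j - y_{\pi(i)}| < |x_i - y_{\pi(i)}|$ contradicts the second entry of the displayed minimum; if $p = y_{\pi(j)}$ then $|x_i - y_{\pi(j)}| < |x_i - y_{\pi(i)}|$ contradicts the first. Either labelling of $p$ produces a contradiction, so $I_i$ and $I_j$ are non-crossing, and $C_i \cap C_j = \emptyset$ follows.

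I do not anticipate any real obstacle here, which is consistent with the paper's remark that the proposition holds ``for trivial reasons.'' The only small point worth noting is that one genuinely needs both halves of the two-sided inequality coming from $p$ lying in the open interior of $I_i$: the colour of $p$ (red vs.\ blue) is not known in advance, and it is precisely this colour that selects which of the two greedy inequalities is violated.
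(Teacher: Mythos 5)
Your argument is correct and follows essentially the same route as the paper's: translate circle intersection into interval crossing, observe that a crossing would place one endpoint of the later-matched pair strictly inside the earlier pair's interval, and derive a contradiction with the greedy selection at that earlier step. One small imprecision: the claim that the four endpoints are pairwise distinct is not a consequence of ``greedy removes matched points'' alone (that only gives $x_i \neq x_j$ and $y_{\pi(i)} \neq y_{\pi(j)}$, not $x_i \neq y_{\pi(j)}$); a cross-colour coincidence such as $x_i = y_{\pi(j)}$ is instead ruled out by another appeal to greediness, since it would produce a distance-$0$ red--blue pair that greedy would have matched first.
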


\begin{proof} The argument is very simple.
    Suppose $i < j$ and circle $C_i$ intersects circle $C_j$. $C_i$ intersects $x_i$ and $y_{\pi(i)}$ while $C_j$ intersects $x_j$ and $x_{\pi(j)}$. Suppose w.l.o.g. $x_i < y_{\pi(i)}$ (otherwise relabel $X$ and $Y$). Since $C_i$ and $C_j$ intersect we have to either have $x_i < x_j < y_{\pi(i)}$ or $x_i < y_{\pi(j)} < y_{\pi(i)}$ but either of these cases leads to a contradiction at stage $i$ of the greedy algorithm.
    \end{proof}

\subsection{Random Points I} One of the most interesting cases is naturally that of matching random points to random points. Here, we can show that the greedy matching leads to nontrivial results.

\begin{corollary}
    Let $X, Y$ be two sets of $n$ uniform i.i.d. random variables on $[0,1]^d$. The greedy matching subject to $c(x,y) = \|x-y\|^{p}$ for $0 < p < 1/2$ satisfies
    $$  \mathbb{E}~ \emph{Greedy}_{p}(X,Y) \leq c_{p} \cdot  
  \begin{cases} 
     n^{1-p} \qquad &\mbox{when}~d=1~\mbox{and}~p<1/4 \\
     n^{1- p/2} \qquad &\mbox{when}~d=1~\mbox{and}~1/4 \leq p <1/2 \\  
    n^{1-p/2}  \qquad  &\mbox{when}~d=2\\
    n^{1 - p/d} \qquad &\mbox{when}~d \geq 3. \end{cases}$$
\end{corollary}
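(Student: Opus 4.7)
The plan is to deduce the corollary directly from the Main Theorem by taking expectations, with the only additional input being the classical asymptotics for the expected bipartite $W_1$ cost of uniform random points.

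First, I would apply the Main Theorem pointwise for $0 < p < 1/2$: for every realization of $X$ and $Y$,
$$
\text{Greedy}_p(X,Y) \;\leq\; c_p \cdot W_1(X,Y)^p \cdot n^{1-p}.
$$
Taking expectations and using that $t \mapsto t^p$ is concave on $[0,\infty)$ (since $0 < p < 1$), Jensen's inequality gives
$$
\mathbb{E}\bigl[W_1(X,Y)^p\bigr] \;\leq\; \bigl(\mathbb{E}\, W_1(X,Y)\bigr)^p.
$$
So the problem reduces to bounding $\mathbb{E}\, W_1(X,Y)$ when $X,Y$ are two independent samples of $n$ i.i.d. uniform points on $[0,1]^d$.

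Second, I would invoke the well-known estimates from the random bipartite matching literature (with $W_1$ here being the unnormalized sum of distances, i.e. $n$ times the empirical Wasserstein-$1$ distance): $\mathbb{E}\, W_1(X,Y) \lesssim \sqrt{n}$ for $d=1$, $\mathbb{E}\, W_1(X,Y) \lesssim \sqrt{n \log n}$ for $d=2$ (the Ajtai--Koml\'os--Tusn\'ady bound), and $\mathbb{E}\, W_1(X,Y) \lesssim n^{1-1/d}$ for $d \geq 3$. Substituting each bound into $c_p \cdot (\mathbb{E}\, W_1)^p \cdot n^{1-p}$ and simplifying the exponents yields respectively $n^{p/2} \cdot n^{1-p} = n^{1-p/2}$, $n^{p/2}(\log n)^{p/2} \cdot n^{1-p} = n^{1-p/2}(\log n)^{p/2}$, and $n^{p-p/d} \cdot n^{1-p} = n^{1-p/d}$, which are precisely the claimed rates.

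There is no real obstacle here beyond invoking the correct reference bounds for $\mathbb{E}\, W_1$; the structural work is all inside the Main Theorem, and the proof of the corollary is just Jensen plus a table lookup. One minor subtlety I would mention is that the Jensen step is used in the ``right'' direction only because $p < 1$, which is exactly the regime of interest; the Main Theorem's restriction $p < 1/2$ is what permits the linear-in-$W_1^p$ factor $n^{1-p}$ in the first place.
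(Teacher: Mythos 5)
Your proposal is correct and follows the same route as the paper: apply the Main Theorem pointwise, pass to expectations via Jensen's inequality for $t \mapsto t^p$, and then plug in the standard $\mathbb{E}\,W_1$ estimates for random bipartite matching ($\sqrt{n}$ for $d=1$, the Ajtai--Koml\'os--Tusn\'ady bound $\sqrt{n\log n}$ for $d=2$, and $n^{1-1/d}$ for $d\geq 3$). The paper leaves the Jensen step implicit and spends its effort instead giving a short Fourier-analytic proof of the $d=1$ bound $\mathbb{E}\,W_1 \lesssim \sqrt{n}$, but the logical structure is identical to yours.
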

Of these, the first inequality ($d=1$ and $0<p<1/4$) is optimal up to constants. 
It seems likely to assume that, when $d=1$ and $p < 1/2$, we have $ \mathbb{E}~ \mbox{Greedy}_{p}(X,Y) \leq c_{p} \cdot n^{1- p}$. Even stronger results seem to be true: the greedy matching appears to be \textit{remarkably} close to the cost function, even at the pointwise level. The Dyck matching is known to produce an optimal rate.
\begin{theorem}[Caracciolo-D’Achille-Erba-Sportiello \cite{c}] For various different models of $n$ random points on $[0,1]$, we have
$$ \mathbb{E} \left(\emph{Dyck}_n\right) \sim \begin{cases} 
n^{1-p} \qquad &\mbox{if}~0 < p < \frac{1}{2} \\
n^{1/2} \log{n} \qquad &\mbox{if}~p = \frac{1}{2} \\
n^{1/2} \qquad &\mbox{if}~\frac{1}{2} < p \leq 1. \end{cases} 
$$
\end{theorem}
The Dyck matching is an a priori global construction that is naturally connected to the structure of a Brownian bridge, a fact that allows for a variety of tools to be applied. Conversely, the greedy algorithm is a `blind' local algorithm: it is difficult to predict what it will do without running it. As such, proving a result along the lines of Caracciolo-D’Achille-Erba-Sportiello \cite{c} for the greedy matching appears to be difficult and in need of new ideas.

\subsection{Random points II}
More can be said if we assume that the points are chosen uniformly at random with respect to a fixed probability measure $\mu$ since one would then expect a certain limiting behavior to arise. This is indeed the case.

\begin{theorem}[Barthe-Bordenave \cite{barthe}, special case] Let $0<p<d/2$ and $\mu$ be the uniform measure on a bounded set $\Omega \subset \mathbb{R}^d$. If $X,Y$ are i.i.d. copies from $\mu$, then
    $$ \lim_{n \rightarrow \infty}  n^{\frac{p}{d} -1} \cdot W^p_{p}(X,Y) = \beta_{p}(d) \cdot |\Omega|^{\frac{p}{d}}.$$
\end{theorem}

When $d=1$, the result has recently been extended by Goldman-Trevisan \cite{dario} to also allow for randomness with respect to variable absolutely continuous density.

\begin{theorem}[Goldman-Trevisan \cite{dario}, special case] Let $0 < p < 1/2$. There exists $c_{p} > 0$ such that for compactly supported $\mu$ with a.c. density $f(x)dx$
    $$ \lim_{n \rightarrow \infty}  n^{p -1} \cdot W^p_{p}(X,Y) = c_{p} \int_{\mathbb{R}} f(x)^{1-p} dx.$$
\end{theorem}
 The restriction $p < 1/2$ is necessary, the behavior is strictly different at $p = 1/2$ (this is the scale where the fluctuations of the empirical distribution of the points starts to come into play).
 Since the uniform measure is a special case of a measure of the form $f(x)dx$, the implicit (universal) constants are the same as in the result of Barthe-Bordenave in the sense of $ c_{p} = \beta_{p}(1).$
We provide an easy explicit lower bound on these constants. 

\begin{proposition} Let $0 < p < d/2$. Then
    $$ \beta_p(d) \geq    \omega_d^{-\frac{p}{d} } \cdot \Gamma\left( 1 + \frac{p}{d}\right),$$
    where $\omega_d$ is the volume of the unit ball in $\mathbb{R}^d$.
\end{proposition}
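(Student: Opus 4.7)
The strategy is to lower-bound $W^p_p(X,Y)$ by a sum of nearest-neighbor distances and then identify the constant produced by the natural $n^{1/d}$ scaling. Since any bijection $\pi$ satisfies $\|x_i - y_{\pi(i)}\|^p \geq \min_{1 \leq j \leq n}\|x_i - y_j\|^p$, the trivial inequality
\[
W^p_p(X,Y) \;\geq\; \sum_{i=1}^{n} \min_{1 \leq j \leq n} \|x_i - y_j\|^p
\]
holds deterministically. Taking expectations and using that the $x_i$ are i.i.d. reduces everything to a single one-point estimate, $\mathbb{E}[W^p_p(X,Y)] \geq n \cdot \mathbb{E}[\min_j \|x_1 - y_j\|^p]$.

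To estimate this one-point expectation, I would condition on $x_1 = x$, apply the layer-cake formula, and use the crude volume bound $|B(x,t) \cap \Omega| \leq \omega_d t^d$:
\[
\mathbb{E}\bigl[\min_j \|x - y_j\|^p \bigm| x_1 = x\bigr] = \int_0^\infty p\, t^{p-1}\left(1 - \frac{|B(x,t) \cap \Omega|}{|\Omega|}\right)^n dt \;\geq\; \int_0^\infty p\, t^{p-1}\left(1 - \frac{\omega_d t^d}{|\Omega|}\right)_{+}^{n} dt.
\]
Crucially, this lower bound is uniform in $x$, so boundary effects are automatically absorbed in our favor. The natural rescaling $u = n^{1/d}\, t$, together with dominated convergence (the integrand is dominated by $p\, u^{p-1}\, e^{-\omega_d u^d / |\Omega|}$ via $(1 - s/n)^n \leq e^{-s}$), yields
\[
\lim_{n \to \infty}\; n^{p/d} \int_0^\infty p\, t^{p-1}\left(1 - \frac{\omega_d t^d}{|\Omega|}\right)_{+}^{n} dt \;=\; \int_0^\infty p\, u^{p-1}\, e^{-\omega_d u^d / |\Omega|}\, du,
\]
and the substitution $v = \omega_d u^d / |\Omega|$ evaluates the right-hand side explicitly to $\bigl(|\Omega|/\omega_d\bigr)^{p/d}\, \Gamma(1 + p/d)$ via $\Gamma(1+x) = x\, \Gamma(x)$.

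Combining the above gives
\[
\liminf_{n \to \infty}\; n^{p/d - 1}\, \mathbb{E}[W^p_p(X,Y)] \;\geq\; \omega_d^{-p/d}\, |\Omega|^{p/d}\, \Gamma\!\left(1 + \tfrac{p}{d}\right).
\]
Comparing with the Barthe-Bordenave limit $n^{p/d-1}\, W^p_p(X,Y) \to \beta_p(d) \cdot |\Omega|^{p/d}$ and dividing by $|\Omega|^{p/d}$ then yields the proposition. The main technical point is that our estimate lives on the side of expectations while Barthe-Bordenave's statement concerns the random quantity itself; one either invokes the $L^1$ version of their convergence, or verifies uniform integrability of $n^{p/d - 1}\, W^p_p$ using the crude deterministic bound $W^p_p \leq n \cdot \diam(\Omega)^p$ combined with standard concentration. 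The hypothesis $p < d/2$ ensures fluctuations are of lower order than the mean, so this passage is unproblematic.
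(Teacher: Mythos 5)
Your argument is correct and follows essentially the same route as the paper: lower-bound $W_p^p$ by the sum of nearest-neighbor distances, reduce to a single one-point expectation, apply the layer-cake formula with the uniform volume bound $|B(x,t)\cap\Omega|\le\omega_d t^d$, rescale by $n^{1/d}$ and identify the limiting integral as a Gamma function, then compare with the Barthe--Bordenave limit. The only differences are cosmetic or in favor of your write-up: you work in a general domain $\Omega$ (the paper specializes to $[0,1]^d$ and notes the constant is domain-independent), your invocation of dominated convergence via $(1-s/n)^n\le e^{-s}$ is cleaner than the corresponding step in the paper, and you explicitly flag the minor gap of passing from an expectation bound to the limit of the random quantity in Barthe--Bordenave, which the paper leaves implicit.
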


The argument is not terribly difficult and somewhat standard for these types of problems, see \cite{steele}. The bound seems to be rather accurate for small values of $p$. As a consequence, we obtain two-sided bounds in the one-dimensional setting when $p$ is close to 0. There is an interesting heuristic: we expect that the optimal matching will send most points distance $\sim c \cdot n^{-1}$ with $c \sim 1$ ranging over some different values but being approximately at order $\sim 1$ and thus the cost should be close to $\sim c^{p} \cdot n^{1-p}$. However, since $p \sim 0+$, one expects $c^{p} \sim 1$ and thus that the optimal transport cost is perhaps given by $(1+\mathcal{O}(p)) \cdot n^{1-p}$. 

\begin{corollary}[Prop. 2 and \cite{c}] For $0 < p< 1/2$,
$$ 2^{-p} \cdot \Gamma(1+p) \leq \beta_1(p) \leq \frac{1}{1-2p}\frac{2^p}{\Gamma(1-p)}$$
\end{corollary}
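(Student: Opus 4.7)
The corollary has a clean two-part structure and both halves are essentially immediate once the preceding results are granted, so the plan is to dispose of them one at a time.

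\textbf{Lower bound.} The plan is simply to specialize Proposition 2 to $d=1$. The unit ball in $\mathbb{R}$ is the interval $[-1,1]$, whose $1$-dimensional volume is $\omega_1 = 2$. Substituting $d=1$ and $\omega_1 = 2$ into the bound of Proposition 2 yields
\begin{equation*}
    \beta_1(p) \;\geq\; \omega_1^{-p}\,\Gamma(1+p) \;=\; 2^{-p}\,\Gamma(1+p),
\end{equation*}
which is exactly the claimed lower bound. No additional work is required.

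\textbf{Upper bound.} The plan is to exploit the obvious inequality
\begin{equation*}
    W_p^p(X,Y) \;\leq\; \mathrm{Dyck}_p(X,Y)
\end{equation*}
together with the exact asymptotics of the Dyck matching cost for $n$ i.i.d.\ uniform samples on $[0,1]$. Caracciolo--D'Achille--Erba--Sportiello (as referenced in the statement of the corollary and discussed in \S3) carry out the analysis of the Dyck matching on the uniform model: the discrepancy process $g(\lfloor nt\rfloor/n)/\sqrt{n}$ converges to a Brownian bridge, and the Dyck pairing corresponds to matching points across level sets of this process. Excursion theory for the bridge then gives an explicit closed-form expression for $\lim_{n\to\infty} n^{p-1}\,\mathbb{E}[\mathrm{Dyck}_p]$, which is precisely $\frac{2^p}{(1-2p)\,\Gamma(1-p)}$; the prefactor $1/(1-2p)$ reflects the integrability of the expected cost of an excursion of length $\ell$ weighted by $\ell^p$, which requires $p<1/2$. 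Combining this with $W_p^p \leq \mathrm{Dyck}_p$ and the Goldman--Trevisan identification
\begin{equation*}
    \beta_1(p) \;=\; \lim_{n\to\infty} n^{p-1}\,\mathbb{E}\bigl[W_p^p(X,Y)\bigr]
\end{equation*}
(applied to $f \equiv 1$ on $[0,1]$, so that $\int f^{1-p} = 1$) delivers the upper bound.

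\textbf{Main obstacle.} Neither step is a genuine obstacle: the lower bound is a one-line substitution, and the upper bound is essentially a citation to \cite{c}. If anything, the only delicate point is book-keeping of constants in the Dyck asymptotics, i.e., checking that the constant arising from the Brownian-bridge excursion computation is indeed $2^p/((1-2p)\Gamma(1-p))$ and that the necessary range $0<p<1/2$ matches the hypothesis of the corollary.
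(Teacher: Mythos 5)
Your proposal is correct and is essentially the paper's own argument: the corollary's attribution ``Prop.~2 and \cite{c}'' is precisely the two-step expansion you give. The lower bound is the $d=1$ specialization of Proposition 2 with $\omega_1 = 2$; the upper bound follows from $W_p^p \leq \mathrm{Dyck}_p$ together with the exact Dyck-matching asymptotic constant from \cite{c} and the Barthe--Bordenave (or, redundantly, Goldman--Trevisan with $f\equiv 1$) identification of $\beta_1(p)$, with Fatou's lemma handling the passage from a.s.\ convergence of $n^{p-1}W_p^p$ to a bound on its expectation.
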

In particular, we conclude that $\beta_1(p) = 1 + \mathcal{O}(p)$ when $p \sim 0$ is small.

\subsection{Extreme concave matching}
The cost $c(x,y) = |x-y|^{p}$ is particularly natural. There is, in a suitable sense, a canonical limit as $p \rightarrow 0^+$ since
$$ \lim_{p \rightarrow 0^+} \frac{|x-y|^{p} - 1}{p} = \log |x-y|.$$
As suggested by Fig. \ref{fig:2}, the greedy algorithm performs very well in this setting. We prove a basic result suggesting that this is not a coincidence.

\begin{proposition} Let $X,Y$ be two sets of $n$ distinct points in a metric space such that $d(x_i, y_j) \leq 1$ and assume that all pairwise distances are unique. Then there exists $K \in \mathbb{N}$ such that for all $k \geq K$ the solution of the optimal matching problem
    $$ \min_{\pi \in S_n} \sum_{i=1}^{n} \left( \log d(x_i, y_{\pi(i)})\right)^{2k+1} \quad \mbox{    is given by the greedy matching.}$$
\end{proposition}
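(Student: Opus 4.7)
The plan is to reformulate the minimization as a maximization involving odd powers of nonnegative weights, and then identify the greedy matching with the unique lexicographic maximum of a sorted weight profile, which for $k$ large agrees with the $\ell^{2k+1}$-maximizer.

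Set $a_{ij} := -\log d(x_i, y_j) \ge 0$ (using $d(x_i,y_j) \le 1$); since pairwise distances are distinct, so are the $a_{ij}$. The identity $(\log d)^{2k+1} = -a^{2k+1}$ turns the problem into maximization of $S_k(\pi) := \sum_{i=1}^n a_{i,\pi(i)}^{2k+1}$ over $\pi\in S_n$. Attach to each matching the vector $\mathbf{a}(\pi) = (a^{(1)}(\pi),\ldots,a^{(n)}(\pi))$ of its $n$ weights sorted in decreasing order; its entries are strictly decreasing, and distinct matchings produce distinct such vectors. First I would show that the greedy matching $\pi^\star$ is the lexicographic maximum of $\{\mathbf{a}(\pi)\}_{\pi\in S_n}$: the first greedy pick is the edge of globally maximal $a_{ij}$, and any lex-maximizer must include it (else $a^{(1)}$ is strictly smaller); deleting those points reduces the problem to the same statement on $X\setminus\{x_{i_1}\}, Y\setminus\{y_{j_1}\}$, so induction on $n$ closes this step.

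Next I would show that for $k$ large the lex-maximum actually maximizes $S_k$. Given $\pi'\neq\pi^\star$, let $j$ be the first coordinate where $\mathbf{a}(\pi^\star)$ and $\mathbf{a}(\pi')$ differ, and set $\alpha = a^{(j)}(\pi^\star) > a^{(j)}(\pi') = \beta \ge 0$ together with $\gamma = \max(a^{(j+1)}(\pi^\star), a^{(j+1)}(\pi'))$, with the convention $a^{(n+1)}=0$. Distinctness of the $a_{ij}$ forces $\gamma < \alpha$. The contributions from $i<j$ cancel in $S_k(\pi^\star) - S_k(\pi')$, so
$$ S_k(\pi^\star) - S_k(\pi') \ge \alpha^{2k+1} - \beta^{2k+1} - n\gamma^{2k+1} = \alpha^{2k+1}\Bigl(1 - (\beta/\alpha)^{2k+1} - n(\gamma/\alpha)^{2k+1}\Bigr), $$
which is strictly positive for $k \ge K(\pi')$ because $\beta/\alpha,\gamma/\alpha \in [0,1)$. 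Setting $K := \max_{\pi'\neq \pi^\star} K(\pi')$, a maximum over the finite set $S_n\setminus\{\pi^\star\}$, shows that $\pi^\star$ uniquely minimizes the original functional for every $k \ge K$.

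The main obstacle is the identification of the greedy matching with the lex-maximum of sorted weight profiles; the rest is the standard fact that $\ell^{2k+1}$ sums of distinct nonnegative quantities are dominated by their maximum term as $k \to \infty$. The hypothesis that all pairwise distances are distinct is used crucially both to force distinct sorted profiles and to make the leading comparison $\gamma < \alpha$ strict.
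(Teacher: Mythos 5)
Your proof is correct, and it rests on the same scale-separation phenomenon the paper exploits (for $k$ large, the single term with the largest weight $a_{ij}$ dominates everything else), but the packaging is genuinely different. The paper's argument is an iterated pigeonhole: using $n\min_i s_i \le \sum_i s_i \le \min_i s_i$ for the negative summands $s_i = (\log d(x_i,y_{\pi(i)}))^{2k+1}$, it shows that for $k\ge K_1$ any optimal $\pi$ must match the globally closest pair, then deletes that pair and repeats, accumulating a chain of thresholds $K_1,\dots,K_{n-1}$. You instead isolate a clean intermediate lemma — the greedy matching is the unique lexicographic maximum among the sorted weight profiles $\mathbf a(\pi)$ — and then prove that the lex-maximizer becomes the $\ell^{2k+1}$-maximizer for all large $k$ by a single comparison at the first coordinate where $\mathbf a(\pi^\star)$ and $\mathbf a(\pi')$ differ, taking the final $K$ as a max over the finite set $S_n\setminus\{\pi^\star\}$. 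The lex-max lemma (proved by the same small induction on $n$ that the paper runs implicitly) is a reusable structural fact, and it cleanly separates the combinatorial content (greedy is lex-optimal) from the analytic content (lex-optimal implies $\ell^{2k+1}$-optimal as $k\to\infty$). Both proofs lean on the distinctness hypothesis in the same two places: to guarantee that the sorted profile of $\pi^\star$ is strictly decreasing (so that $\gamma<\alpha$), and to force the dominant comparison to be strict. In short, same underlying idea, but your formulation makes explicit a lemma the paper leaves implicit and replaces the nested induction with a one-shot comparison.
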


The argument is quite simple and there is nothing particularly special about the logarithm, similar results could be attained with many other cost functions that are dramatically different across different length scales. The main point of this simple Proposition is to illustrate that the effectiveness of the greedy matching in the setting of very concave cost functions is perhaps not entirely surprising: the dramatic separation of scales puts a heavy reward on having matching with very small distance which, coupled with the separation of scales, then suggests the greedy algorithm as a natural object.

\subsection{Numerics}
The purpose of this section is to consider the behavior of the greedy algorithm and the Dyck matching when matching $n$ i.i.d. random points on $[0,1]$ given the cost function $c(x,y) = |x-y|^{p}$ for $0 \leq p \leq 1/2$. The results suggest that, at least for random points, the greedy matching leads to results that are remarkably close to the ground truth: this effect becomes more pronounced when $p$ becomes smaller (also at least partially suggested by the Proposition).
 
\begin{center}
    \begin{figure}[h!]
        \begin{tikzpicture}
      %      \node at (0,0) {\includegraphics[width=0.5\textwidth]{fign100.pdf}};
       %     \node at (2,1.5) {Dyck};
    %         \node at (2.1,-0.4) {greedy};
                         \node at (6,0) {\includegraphics[width=0.5\textwidth]{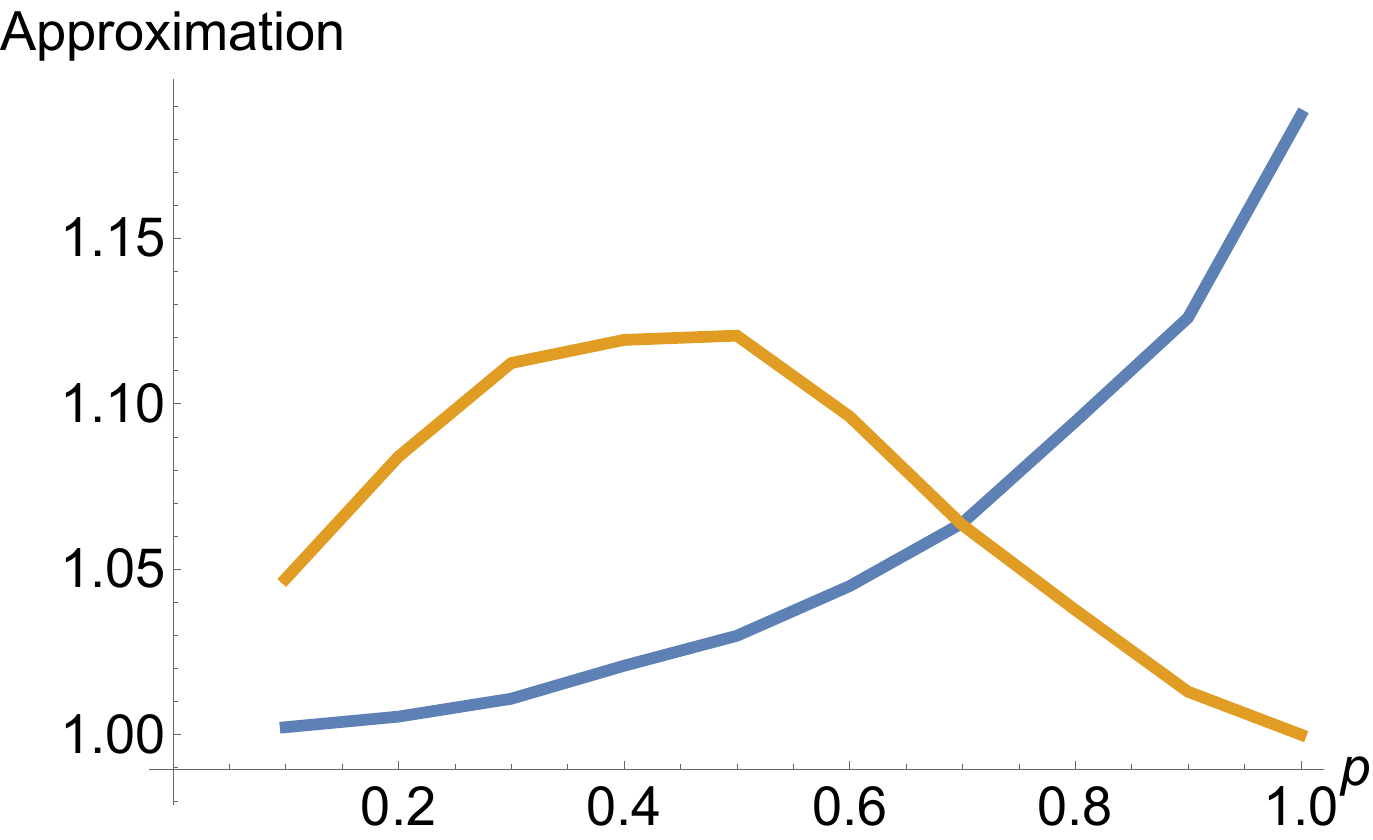}};
                                     \node at (8.2,1.5) {greedy};
             \node at (8.5,-0.9) {Dyck};
        \end{tikzpicture}
        \caption{Average ratio of cost divided by the minimal cost for both the Dyck matching and the greedy matching for $n=250$ iid points subject to the cost function $c(x,y) = |x-y|^{p}$.}
    \end{figure}
\end{center}

The effectiveness of the greedy algorithm is strictly restricted to the region $0 < p < 1/2$: for $p \geq 1/2$, the greedy algorithm starts to scale differently and becomes less effective. This can already be observed for small values of $n$ (and is pointed out by d'Achille \cite[Section 1.4]{achille} for $p \geq 1$). We also observe that, as $p$ becomes smaller, the effectiveness of the greedy matching increases dramatically while, for $p$ close to 1, the effectiveness of the Dyck matching increases dramatically (see Fig. \ref{fig:bound}). The Dyck matching is optimal when $p=1$ (see \cite{c2}).

\begin{center}
    \begin{figure}[h!]
        \begin{tikzpicture}
            \node at (0,0) {\includegraphics[width=0.5\textwidth]{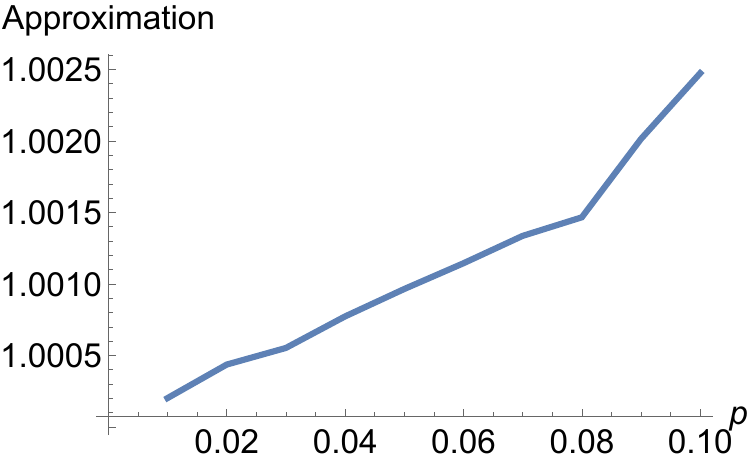}};
             \node at (2,-0.3) {greedy};
                         \node at (6,0) {\includegraphics[width=0.5\textwidth]{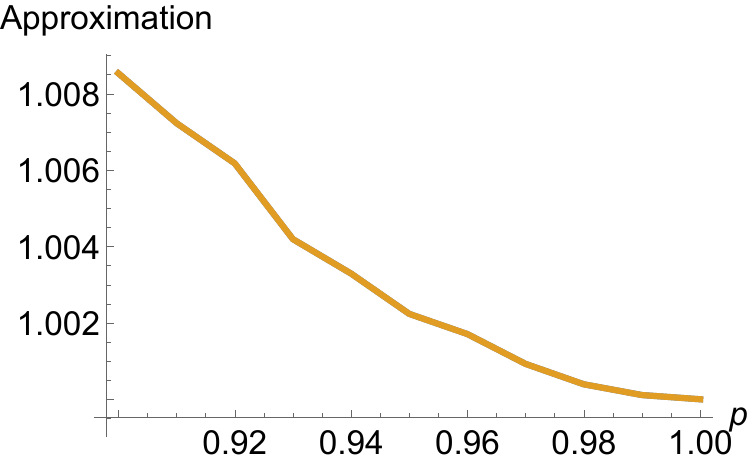}};
             \node at (8.1,-0.8) {Dyck};
        \end{tikzpicture}
        \caption{Ratio of obtained matching cost divided by the minimal cost for $n=100$ random points. Left: the greedy matching when $p \sim 0$. Right: the Dyck matching when $p \sim 1$.}
        \label{fig:bound}
    \end{figure}
\end{center}

 Fig. \ref{fig:bound} is well suited to iterate a main contribution of our paper: when $d=1$ and considering matchings with $c(x,y) = d(x,y)^{p}$ and $0 < p < 1$, there exists a natural dichotomy depending on whether $p$ is close to 0 or whether it is close to 1.
\begin{center}
    \begin{tikzpicture}
        \draw [ultra thick] (0,0) -- (6,0);
        \draw [ultra thick] (0,-0.1) -- (0,0.1);
          \draw [ultra thick] (6,-0.1) -- (6,0.1);
          \node at (0, -0.4) {$0$};
           \node at (3, -0.4) {$p$};
         \node at (6, -0.4) {$1$};
         \node at (0, 0.4) {greedy};
            \node at (6, 0.4) {Dyck};
            \node at (3,0.4) {$c(x,y) = d(x,y)^{p}$};
            \node at (8,0) {($d=1$)};
    \end{tikzpicture}
\end{center}

A natural question is, for example, whether one can identify the precise value $0 < p^* < 1$ where the effectiveness of the two algorithms undergoes a phase transition and the matching problem for $n$ iid random points is better approximated by the Dyck matching or the greedy matching, respectively.

\section{Proof of Theorem 1}

\begin{proof}
We assume the sets of points are $X = \left\{x_1, \dots, x_n\right\}$ and $Y=\left\{y_1, \dots, y_n\right\}$ and we will use $X_k, Y_k$ to denote the set of points after $k-1$ points have been removed following the greedy algorithm.
In particular, $X = X_1$ and $Y= Y_1$. We will also abbreviate the cost at stage $k$ via
$$ c_k = \inf_{x \in X_k, y \in Y_k} d(x, y).$$
Our goal is to estimate
$$ \mbox{the cost of the greedy algorithm} \quad \sum_{k=1}^{n} c_k^{p}.$$
 Recalling that, by definition, $X_k$ and $Y_k$ have $n-k+1$ elements each, the Wasserstein distance $W_1$ can be written as
$$ W_1(X_k, Y_k) = \min_{\pi:X_k \rightarrow Y_k \atop \pi ~{\mbox{\tiny bijective}}} \quad \sum_{x \in X_k}^{} d(x, \pi(x))$$
where the minimum ranges over all bijections. At this point, we remark that the definition of $W_1$ is slightly more comprehensive (the infimum ranges over all ways of splitting and rearranging points): at this point, we employ the celebrated result of Birkhoff \cite{birk} and von Neumann \cite{von} ensuring that in the case of $n$ equal masses being transported to $n$ equal masses, the optimal solution can be realized by a permutation (no mass is `split'), we refer to \cite{bamdad} for a generalization.

Our first observation uses averaging. Let us use $\pi$ to denote the permutation achieving the optimal $W^1$ transport cost. Then the distance achieved by the greedy matching in the $k-$th step can be bounded from above by
\begin{align*}
     c_k = \inf_{x \in X_k, y \in Y_k} d(x, y) \leq \frac{1}{n-k+1}   \sum_{x \in X_k}^{} d(x, \pi(x)) = \frac{W_1(X_k, Y_k)}{n-k + 1}.
\end{align*}
The second ingredient will be to show that $W_1(X_{k+1}, Y_{k+1})$ cannot be much larger than $W_1(X_k, Y_k)$.
For this purpose, let us assume that $X_k$ is given by the points $x_1, x_2, \dots, x_{n-k+1}$ and, similarly, $Y_k$ is given by  $y_1, y_2, \dots ,y_{n-k+1}$. Then, after possibly relabeling the points, we have that
$$ W_1(X_k, Y_k) = \sum_{i=1}^{n-k+1} d(x_i, y_i).$$
Let us now assume that the greedy matching at this point matches $x_i$ and $y_j$, meaning that
$d(x_i, y_j)$ is the smallest pairwise distance in $X_k \times Y_k$. 
Then the greedy matching is going to match these two points up and the remaining sets of points are given by
$$ X_{k+1} = X_k \setminus \left\{x_i \right\} \quad \mbox{and} \quad Y_{k+1} = Y_k \setminus \left\{y_j \right\}.$$
We will provide an upper bound on $W_1(X_{k+1}, Y_{k+1})$ by taking the original matching between $X_k$ and $Y_k$ and then modify it a little to obtain a matching for $X_{k+1}$ with $Y_{k+1}$. This is done by a simple modification: the point $x_j$ is now mapped to $y_i$ (note that $x_i$ has been mapped to $y_j$ and both have been removed from the set). We preserve all other matchings. Then
$$ W_1(X_{k+1}, Y_{k+1}) \leq W_1(X_{k}, Y_{k}) + d(x_j, y_i) - d(x_i, y_i) - d(x_j, y_j).$$
At this point, we invoke the triangle inequality and argue that
\begin{align*}
    d(x_j, y_i) &\leq  d(x_j, y_j) + d(y_j, y_i) \\
     &\leq d(x_j, y_j) + d(y_j, x_i) + d(x_i, y_i)
\end{align*}
Combining the last two inequalities, we realize that we can bound the increase in the $W_1$-distance between the two sets in terms of the cost function of the greedy matching at the $k-$th step by
\begin{align*}
 W_1(X_{k+1}, Y_{k+1}) &\leq W_1(X_{k}, Y_{k}) + d(x_j, y_i) - d(x_i, y_i) - d(x_j, y_j) \\
 &\leq W_1(X_{k}, Y_{k}) + d(x_i, y_j)  \\
 &= W_1(X_{k}, Y_{k}) + c_k.
\end{align*}
Combining these two ingredients, we arrive
\begin{align*}
   W_1(X_{k+1}, Y_{k+1})  &\leq W_1(X_{k}, Y_{k}) + c_k \\
   &\leq W_1(X_{k}, Y_{k}) + \frac{W_1\left(X_k, Y_k\right)}{n-k+1} \\
   &= W_1(X_{k}, Y_{k}) \left( 1 + \frac{1}{n-k+1} \right).
\end{align*}
By induction, we obtain
 $$   W_1(X_{k}, Y_{k}) \leq W_1(X,Y)\cdot \prod_{\ell=1}^{k-1} \left(1 + \frac{1}{n-\ell+1}\right).$$   
Observe that
\begin{align*}
    \prod_{\ell=1}^{k-1} \left(1 + \frac{1}{n-\ell+1}\right) = \prod_{\ell=1}^{k-1} \frac{n-\ell+2}{n-\ell+1} = \frac{n+1}{n-k+2}.
\end{align*}
Therefore
 $$   W_1(X_{k}, Y_{k}) \leq \frac{n+1}{n-k+2} \cdot  W_1(X,Y).$$   
Applying the pigeonhole principle one more time, we see that
$$ c_k \leq  \frac{W_1\left(X_k, Y_k\right)}{n-k+1} \leq \frac{n+1}{(n-k+1)^2} \cdot  W_1(X,Y).$$
Thus
$$ \sum_{k=1}^{n} c_k^{p} \leq  W_1(X,Y)^{p} \cdot (n+1)^{p} \cdot \sum_{k=1}^{n} \frac{1}{(n-k+1)^{2p}}.$$
We have
$$ \sum_{k=1}^{n} \frac{1}{(n-k+1)^{2p}} = \sum_{k=1}^{n} \frac{1}{k^{2p}}.$$
When $0 < p < 1/2$, we have
$$  \sum_{k=1}^{n} \frac{1}{k^{2p}} \leq 1 + \int_1^{n+1} \frac{1}{x^{2p}} dx \leq 1 + \frac{(n+1)^{1-2p}}{1-2p}.$$
Dealing with the remaining cases in the usual fashion, we obtain
$$  \sum_{k=1}^{n} c_k^{p}\leq c_{p} \cdot W_1(X,Y)^{p} \cdot
\begin{cases} 
n^{1-p} \qquad &\mbox{if}~0 < p < \frac{1}{2} \\
\sqrt{n} \cdot \log{n} \qquad &\mbox{if}~p=\frac{1}{2} \\
n^{p} \qquad &\mbox{if} ~\frac{1}{2} < p < 1.
\end{cases}
$$
The argument also shows that, for $p < 1/2$ we have
$$\mbox{Greedy}_{p}(X,Y) \leq \left(\frac{1}{1-2p} + o(1)\right) \cdot n^{1-p} \cdot W_1(X,Y).$$
In particular, for $p$ close to 0, we have that that $1/(1-2p) \sim 1 + 2p$ and the implicit constant
is close to 1.
\end{proof}

\subsection{Proof of Corollary 1}
\begin{proof}
    Corollary 1 follows immediately from the Theorem. The missing ingredient is a good estimate on $W_1(X,Y)$ where $X$ and $Y$ are two sets of $n$ i.i.d. uniformly distributed points in $[0,1]^d$. The case $d=2$ is arguably the most famous, the celebrated result of Ajtai, Komlos, Tusnady \cite{akt} ensures that 
$$ c_1 \sqrt{n \log{n}} \leq \min_{\pi \in S_{n}} \sum_{i = 1}^{n} \| x_i - y_{\pi(i)}\| \leq c_2\sqrt{n \log{n}}.$$
    with high probability. The one-dimensional case is a bit simpler and one has (see, for example, \cite{bobkov}), with high probability,
    $$  \min_{\pi \in S_{n}} \sum_{i = 1}^{n} \| x_i - y_{\pi(i)}\| \leq c  \sqrt{n}.$$

%     A short proof of this one-dimensional fact can be given via Fourier Analysis: as noted in \cite{brown, stein}: for any measure $\mu$ on the unit interval, we have
%     $$ W_1(\mu, dx) \leq W_2(\mu, dx) \leq  c\left( \sum_{\ell \in \mathbb{Z} \atop \ell \neq 0} \frac{|\widehat{\mu}(\ell)|^2}{\ell^2} \right)^{1/2},$$
%     where the upper bound, in the context of finite point sets, is also known as Zinterhof's diaphony \cite{zinter}. In our setting, where $\mu = \sum_{k=1}^{n} \delta_{x_k}$ and the $x_k$ are random variables on $[0,1]$, we have
%     \begin{align*}
%         \mathbb{E} |\widehat{\mu}(\ell)|^2 &= \mathbb{E} \left| \int_0^1 e^{-2\pi i \ell x} d\mu \right|^2 = \mathbb{E} \left| \sum_{k=1}^{n} e^{-2 \pi i \ell x_k} \right|^2 \\
%         &= \mathbb{E} \sum_{k,m = 1}^n e^{-2 \pi i \ell x_k} e^{2 \pi i \ell x_m} = n + \sum_{k,m = 1 \atop k \neq m}^n   \mathbb{E} e^{-2 \pi i \ell x_k} e^{2 \pi i \ell x_m} = n.
%     \end{align*}
% Thus, with $\mathbb{E} \sqrt{|X|} \leq \sqrt{ \mathbb{E} |X|}$, we have
% $$ \mathbb{E}  W_1(\mu, dx) \leq c \left( \sum_{\ell \in \mathbb{Z} \atop \ell \neq 0} \frac{n}{\ell^2} \right)^{1/2} \leq c_2 \sqrt{n}.$$
% From this and the triangle inequality $W_1(\mu_X, \mu_Y) \leq W_1(\mu_X, dx) + W_1(\mu_Y, dx)$ the result follows. 
The case $d \geq 3$ where
    $$  \min_{\pi \in S_{n}} \sum_{i = 1}^{n} | x_i - y_{\pi(i)}| \leq c \cdot n^{1-1/d}$$
    was already remarked by Ajtai, Komlos, Tusnady \cite{akt}. A modern treatment of a much more general case is given in \cite{fournier}. This proves that, for $X$ and $Y$ i.i.d. random points and $p<1/2$
    $$  \mathbb{E}~ \mbox{Greedy}_{p}(X,Y) \leq c_{p} \cdot \begin{cases} n^{1- p/2} \qquad &\mbox{when}~d=1 \\  
    n^{1-p/2} \cdot (\log{n})^{p/2} \qquad  &\mbox{when}~d=2 \\
    n^{1 - p/d} \qquad &\mbox{when}~d \geq 3. \end{cases}$$
There is a slight improvement that was pointed out to us by an anonymous referee. For any $0 < \alpha < 1$, we can write
$$ \sum_{i=1}^{n} \|x_i - y_{\pi(i)}\|^{p} = \sum_{i=1}^{n} \left(\|x_i - y_{\pi(i)}\|^{\alpha} \right)^{p/\alpha}.$$
If $d(x,y)$ is a metric, then $d(x,y)^{\alpha}$ is also a metric for every $0 < \alpha < 1$. Moreover, since $x \rightarrow x^{\alpha}$ is monotonically increasing, the greedy algorithm with respect to the metric $d(x,y)^{\alpha}$ picks the exact same matching as the greedy algorithm with respect to $d(x,y)$. We use $W_{1,\alpha}$ to denote the Wasserstein-1 distance with respect to
the metric $d_{\alpha}(x,y) = \|x-y\|^{\alpha}$.
Applying Theorem 1 to this new metric, we get that, assuming $x_i \leftrightarrow y_i$ to be the greedy matching and $0 < p/\alpha <1$ instead of $p$, then
$$ \sum_{i=1}^{n} \|x_i - y_i\|^p \leq c_{p, \alpha} \left( W_{1,\alpha}(X,Y)^{} \right)^{p/\alpha} \cdot
\begin{cases} 
n^{1-p/\alpha} \qquad &\mbox{if}~0 < p/\alpha < \frac{1}{2} \\
\sqrt{n} \cdot \log{n} \qquad &\mbox{if}~p/\alpha=\frac{1}{2} \\
n^{p/\alpha} \qquad &\mbox{if} ~\frac{1}{2} < p < 1,
\end{cases}$$
where
$$W_{1,\alpha}(X,Y)^{} = \inf_{\pi \in S_n} \sum_{i=1}^{n} \|x_i - y_{\pi(i)}\|^{\alpha}.$$
The result of Barthe-Bordenave \cite{barthe} implies that as long as $\alpha < d/2$ and the
$X,Y$ are i.i.d. random variables with respect to the uniform measure, then
$$ \inf_{\pi \in S_n} \sum_{i=1}^{n} \|x_i - y_{\pi(i)}\|^{\alpha} \lesssim n^{1-\alpha/d}.$$
Combining all these results, assuming that $0 < \alpha < 1$ as well as $0 < p < \alpha$ and $\alpha < d/2$, then
$$ \sum_{i=1}^{n} \|x_i - y_i\|^p \leq c_{p,\alpha}   \begin{cases} 
n^{1-p/d} \qquad &\mbox{if}~0 < p/\alpha < \frac{1}{2} \\
n^{1  - p/d} \cdot \log{n} \qquad &\mbox{if}~p/\alpha=\frac{1}{2} \\
n^{p(2/\alpha-d)} \qquad &\mbox{if} ~\frac{1}{2} < p/\alpha < 1.
\end{cases}$$
 The first case improves on the previous estimate when $d=1$, this requires $\alpha < 1/2$ and thus $p < 1/4$. When $d=2$, then for any $p < 1/2$ we can find $p < \alpha < 1$ such that $p/\alpha < 1/2$ which implies the upper bound $n^{1-p/2}$ without logarithmic corrections. Altogether, this gives, for $0 < p < 1/2$
     $$  \mathbb{E}~ \mbox{Greedy}_{p}(X,Y) \leq c_{p} \cdot \begin{cases} 
     n^{1-p} \qquad &\mbox{when}~d=1~\mbox{and}~p<1/4 \\
     n^{1- p/2} \qquad &\mbox{when}~d=1~\mbox{and}~1/4 \leq p <1 \\  
    n^{1-p/2}  \qquad  &\mbox{when}~d=2 \\
    n^{1 - p/d} \qquad &\mbox{when}~d \geq 3. \end{cases}
    $$
    \end{proof}

\subsection{Proof of Proposition 2}
Since the constant is independent of the domain, it will be enough to derive an upper bound in a fixed, arbitrary domain. We choose the unit cube $[0,1]^d$ for convenience, however, we emphasize that there is nothing particularly special about the unit cube.
\begin{lemma}
Given $n$ i.i.d. uniform points $X_1, \ldots, X_n$ in $[0,1]^d$ and an independent uniform point $Y$, we have for all sufficiently small $0<\varepsilon< \varepsilon_0$ that whenever $Y$ is sufficiently far from the boundary of the unit cube $d(Y, \partial [0,1]^d) \geq \varepsilon$, then
$$
\mathbb P(\min_{1 \leq i \leq n} |Y-X_i |\geq \varepsilon)=\left(1- \omega_d\varepsilon^d\right)^{n},
$$
where $\omega_d$ is the volume of the unit ball in $\mathbb{R}^d$. 
\end{lemma}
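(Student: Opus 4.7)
The plan is to condition on the location of $Y$ and exploit the fact that, once $Y$ is far enough from the boundary, the $\varepsilon$-ball around $Y$ sits entirely inside the cube, so the probability that a single uniform point avoids it is explicit.

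First I would condition on $Y=y$ with $d(y,\partial[0,1]^d)\geq \varepsilon$. The event $\min_i |Y-X_i|\geq \varepsilon$ is the event that none of the $X_i$ lies in the open ball $B(y,\varepsilon)$. By the assumption on $y$, the ball $B(y,\varepsilon)$ is contained in $[0,1]^d$, so its Lebesgue measure equals $\omega_d \varepsilon^d$ exactly (no boundary truncation). Since $X_i$ is uniform on $[0,1]^d$, we get $\mathbb{P}(X_i\in B(y,\varepsilon)) = \omega_d\varepsilon^d$.

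Next I would use independence of the $X_i$'s to conclude that, conditional on $Y=y$,
$$
\mathbb{P}\bigl(\min_{1\leq i\leq n}|Y-X_i|\geq \varepsilon \,\big|\, Y=y\bigr)
=\prod_{i=1}^n \mathbb{P}(X_i\notin B(y,\varepsilon))
=(1-\omega_d\varepsilon^d)^n,
$$
which is independent of $y$. The restriction $\varepsilon<\varepsilon_0$ is there only to ensure $\omega_d\varepsilon^d<1$ so the quantity is a genuine probability; any $\varepsilon_0\leq \omega_d^{-1/d}$ works. Removing the conditioning gives the claimed identity. There is no real obstacle here — the only subtlety is making sure the distance-to-boundary hypothesis is invoked precisely to prevent the ball $B(y,\varepsilon)$ from being clipped, which would otherwise produce a smaller (and $y$-dependent) volume.
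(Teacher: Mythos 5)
Your proof is correct and follows essentially the same route as the paper's: condition on $Y=y$, use independence of the $X_i$'s to factor the probability, and note that the hypothesis $d(y,\partial[0,1]^d)\geq\varepsilon$ guarantees $B(y,\varepsilon)\subset[0,1]^d$ so that $|B(y,\varepsilon)\cap[0,1]^d|=\omega_d\varepsilon^d$ exactly.
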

\begin{proof}
Conditional on $Y$, since the $X_i$'s are all independent, we obtain
\begin{align*}
\mathbb P(\min_{1 \leq i \leq n} |Y-X_i|\geq\varepsilon|Y)=\left(1-|B_Y(\varepsilon)\cap [0,1]^d|\right)^n.
\end{align*}
Since $d(Y, \partial [0,1]^d) \geq \varepsilon$, we have
$$ \left(1-|B_Y(\varepsilon)\cap [0,1]^d|\right)^n = (1 - \omega_d \varepsilon^d)^n.$$
A simple monotonicity argument guarantees that for all $\varepsilon$ the conclusion still holds, up to replacing $=$ with $\geq $.
\end{proof}

\begin{proof}[Proof of Proposition 2] We assume the sets of points $\left\{X_1, \dots, X_n \right\} \subset [0,1]^d$ and
$\left\{Y_1, \dots, Y_n \right\} \subset [0,1]^d$ are both sets of independent uniformly distributed random variables in $[0,1]^d$. 
The main idea is the use of the trivial bound
$$ \mathbb{E} \inf_{\pi \in S_n} \sum_{i=1}^{n} d(x_i, y_{\pi(i)})^{p}  \geq \sum_{i=1}^{n}  \mathbb{E} \inf_{x \in X} |Y_i - x|^p=n\mathbb E\min_{1\leq i\leq n}|X_i-Y|^p$$

We introduce a change of coordinates $\varepsilon = c^p/n^{p/d}$. Then, for all sufficiently small $\varepsilon$ (i.e., for fixed $c, p, d$ and $n\rightarrow +\infty$), we have
\begin{align*}
    \mathbb P\left(\min_{1 \leq j \leq n} |X_i-Y |^p\geq \varepsilon\right) &= \mathbb P\left(\min_{1 \leq j \leq n} |X_i-Y |\geq \frac{c}{n^{1/d}}\right) \\
    &=\left(1 - \frac{\omega_d c^d}{n}\right)^{n} \rightarrow e^{-\omega_d c^d}.
\end{align*}

As we remarked earlier, for \emph{all} $\varepsilon\geq 0$ (i.e., for all $c$) we have
$$
\mathbb{P}\left(\min_{1 \leq j \leq n} |X_i-Y |^p\geq \varepsilon\right)\geq e^{-\omega_dc^p},
$$
so that we obtain, as $n\rightarrow +\infty$, 
\begin{align*}
n \cdot \mathbb E\inf_{1\leq i\leq n}|X_i-Y|^p &=n \int_0^{\infty}  \mathbb P\left(\min_{1 \leq j \leq n} |X_i-Y |^p\geq \varepsilon\right)d\epsilon \\ &=\int_0^{+\infty}pc^{p-1}e^{-\omega_d c^d}dc \\&= n^{1-\frac{p}{d}} \cdot w_d^{-\frac{p}{d}} \cdot \Gamma\left(1+\frac{p}{d}\right).
\end{align*}

%\textcolor{red}{Ok, previous argument seems to give a slightly different result... Here is the previous computation}
%Thus, as $n \rightarrow \infty$, we obtain
%$$\mathbb P\left( \min_{1 \leq i \leq n} \|Y-X_i \| = \frac{c}{n^{1/d}}\right) = \omega_d d c^{d-1}e^{-\omega_d c^d}.$$
%\begin{align*}
 %n \int_0^{\infty} \frac{c^{p}}{n^{\frac{p}{d}}} \omega_d d c^{d-1}e^{-\omega_d c^d} dc &= n^{1- \frac{p}{d}} \omega_d d  \int_0^{\infty}  c^{d+p -1}e^{-\omega_d c^d} dc \\
% &=   \omega_d^{-\frac{p}{d} }  \Gamma\left( 1 + \frac{p}{d}\right)  \cdot n^{1-\frac{p}{d} }.
% \end{align*}
We note that the argument slightly improves when $X_i$ is close to the boundary (since then the volume of a neighborhood intersected with $[0,1]^d$ has smaller volume). However, since the number of points distance $\sim n^{-1/d}$ close to the boundary is $\sim n^{(d-1)/d} \ll n$, exploiting this fact would not lead to better asymptotics.
\end{proof}

\subsection{Proof of Proposition 3}
\begin{proof} 
Since $0< d(x_i, y_j) < 1$, all the summands are negative and can use the trivial estimate
\begin{align*}
 n \min_{1 \leq i \leq n} \left( \log d(x_i, y_{\pi(i)})\right)^{2k+1} &\leq \sum_{i=1}^{n} \left( \log d(x_i, y_{\pi(i)})\right)^{2k+1}\\
 &\leq\min_{1 \leq i \leq n}  \left( \log d(x_i, y_{\pi(i)})\right)^{2k+1}.
\end{align*}
Suppose now that 
$$\min_{1 \leq i \leq n} d(x_i, y_{\pi(i)}) > \min_{1 \leq i,j \leq n} d(x_i, y_{j}).$$
Then, for all $k \geq K_1$ sufficiently large, 
$$n \min_{1 \leq i \leq n} \left( \log d(x_i, y_{\pi(i)})\right)^{2k+1} >\min_{1 \leq i,j \leq n}  \left( \log d(x_i, y_{j})\right)^{2k+1}$$
since one grows exponentially larger than the other. This shows that any optimal matching has to at least coincide with the greedy matching in the first step. We emove the closest pair of points $(x_i, y_j)$ and repeat the procedure on the remaining set of points. We see that for all $k \geq K_2$, the next step has to coincide with that of the greedy matching. Repeating the procedure, we see that for all $K \geq \max(K_1, \dots, K_{n-1})$ the minimum can only be given by the greedy matching. 
\end{proof}

\textbf{Acknowledgment.}
The authors gratefully acknowledge support from the Kantorovich Initiative. A.O. is supported by an AMS-Simons Travel Grant. S.S. was supported by the NSF (DMS-2123224). The authors are indebted to Aleh Tsyvinski for drawing their attention to the problem and to an anonymous referee for suggesting improved rates for Corollary 1.

\end{document}